\newtheorem{theorem}{Theorem}[section]
\newaliascnt{conj}{theorem}
\newaliascnt{cor}{theorem}
\newaliascnt{lemma}{theorem}
\newaliascnt{fact}{theorem}
\newaliascnt{claim}{theorem}
\newaliascnt{prop}{theorem}
\newaliascnt{definition}{theorem}
\newaliascnt{assump}{theorem}
\newaliascnt{question}{theorem}
\newaliascnt{notation}{theorem}
\newaliascnt{convention}{theorem}
\newtheorem{conj}[conj]{Conjecture}
\newtheorem{lemma}[lemma]{Lemma}
\newtheorem{prop}[prop]{Proposition}
\newtheorem{definition}[definition]{Definition}
\newtheorem{question}[question]{Question}
\let\oldtheequation\theequation
\renewcommand\tagform@[1]{\maketag@@@{\ignorespaces#1\unskip\@@italiccorr}}
\renewcommand\theequation{(\oldtheequation)}
\theoremstyle{remark}
\newaliascnt{rmk}{theorem}
\newtheorem{remark}[rmk]{Remark}
\theoremstyle{remark}
\newaliascnt{exam}{theorem}
\newtheorem{exam}[exam]{Example}
\def\sek~{\S{}}
\DeclarePairedDelimiter\abs{\lvert}{\rvert}
\DeclareMathOperator{\Sk}{Sk}
\newcommand{\Rbb}{\mathbb{R}}
\newcommand{\Zbb}{\mathbb{Z}}
\newcommand{\kbf}{\mathbf{k}}
\newcommand{\rbf}{\mathbf{r}}
\newcommand{\xbf}{\mathbf{x}}
\newcommand{\s}{\vskip.1in}
\newcommand{\n}{\noindent}
\newcommand{\p}{\partial}
\newcommand{\be}{\begin{enumerate}}
\newcommand{\ee}{\end{enumerate}}
\newcommand{\op}{\operatorname}
\begin{document}

\title{A dynamical construction of Liouville domains}

\author{Yang Huang}
\address{University of Munich}
\email{hymath@gmail.com} \urladdr{https://sites.google.com/site/yhuangmath}



\begin{abstract}
We first present a general construction of Liouville domains as partial mapping tori. Then we study two examples where the (partial) monodromies exhibit certain hyperbolic behavior in the sense of Dynamical Systems. The first example is based on Smale's attractor, a.k.a., solenoid; and the second example is based on certain hyperbolic toral automorphisms.
\end{abstract}

\maketitle


A \emph{Liouville domain} $(W^{2n},\omega,X)$ is a triple where $W$ is a compact manifold with (nonempty) boundary, $\omega$ is a symplectic form and $X$ is a vector field, called the \emph{Liouville vector field}, such that $\mathcal{L}_X \omega=\omega$ and $X$ is outward pointing along $\p W$. Define the \emph{Liouville form} $\lambda \coloneqq i_X \omega$. Then by Cartan's formula, the symplectic form $\omega=d\lambda$ is exact. A Liouville domain $(W,\omega,X)$ is called \emph{Weinstein} if, in addition, $X$ is gradient-like with respect to a Morse function on $W$. 

It turns out that the Weinstein condition is rather restrictive on the topology of $W$. Indeed, any Weinstein domain admits a handle decomposition which contains only handles of index at most $n$. On the other hand, since the first example of McDuff \cite{McD91}, it has been known that general Liouville manifolds are not subject to such topological constraints. See \cite{Gei95,Mit95,Gei94,MNW13} for more constructions of Liouville, but not Weinstein, domains. Unfortunately, no good methods are currently available to distinguish between Liouville and Weinstein structures besides the obvious topological distinction. 

The goal of this note is produce a few (exotic) examples of Liouville domains where the dynamics of the Liouville vector field $X$ can be explicitly described. It turns out that the dynamics of $X$ is only interesting when restricted to the skeleton of $(W,\omega,X)$ which we now introduce.

Given a Liouville domain $(W,\omega,X)$, the \emph{skeleton} $\Sk(W,\omega,X)$ is defined by
\begin{equation*}
\Sk(W,\omega,X) \coloneqq \bigcap_{t>0} \phi^X_{-t}(W),
\end{equation*}
where $\phi^X_t$ denotes the time-$t$ flow of $X$. Clearly $\Sk(W,\omega,X)$ contains all the information\footnote{It is not enough, however, to merely know, say, the topological type of $\Sk(W,\omega,X)$ whose embedding in $W$ can be very wild.} of the symplectic structure, but it is, in general, not invariant under Liouville homotopies. It is a very interesting problem to understand how $\Sk(W,\omega,X)$ and $\Sk(W,\omega,X')$ are related to each other if $X,X'$ are two different Liouville vector fields on the same symplectic manifold. In the case of Weinstein domains, an on-going project of Alvarez-Gavela, Eliashberg and Nadler \cite{AGEN19} aims at simplifying $\Sk(W,\omega,X)$, up to Weinstein homotopy, such that it contains only arboreal singularities introduced by Nadler \cite{Nad17}. In the following we sometimes simply write $\Sk(W)$ for the skeleton if there is no risk of confusion.

Now let's present the main construction of this note: \emph{Liouville domains as partial mapping tori}. Let $M^{2n-1}$ be a compact manifold with boundary and $\alpha$ be a contact form on $M$. \emph{For the rest of this note, every contact manifold comes with a chosen contact form.}

\begin{definition} \label{defn:contraction}
A compact contact manifold $(M,\alpha)$ admits a \emph{contraction} if there exists a map $\phi:M \to M$, which satisfies the following properties:
\begin{itemize}
	\item[(D1)] $\phi(M) \subset \op{int} (M)$;
	\item[(D2)] $\phi$ is a diffeomorphism onto its image;
	\item[(D3)] $\phi^{\ast} (\alpha) = e^{-g} \alpha$, where $g: M \to \Rbb_{>0}$ is a positive function.
\end{itemize}
\end{definition}

Note that (D3) implies $\p M \neq \varnothing$ for volume considerations. We start with a not so interesting example.

\begin{exam} \label{ex:weinstein}
Let $Y$ be a closed manifold and consider the 1-jet space $J^1 Y$ equipped with the standard contact form $\alpha=dz-pdq$. Let $M \subset J^1 Y$ be a closed tubular neighborhood of the 0-section $Y \subset J^1 Y$. Then the map $\phi: M \to M$ defined by $\phi(z,q,p)=(z/2,q,p/2)$ is clearly a contraction. In this example, $\phi(M)$ is a deformation retract of $M$, but this is not necessarily the case in general.
\end{exam}

We construct a Liouville domain $W_{(M,\phi)}$, as a partial mapping torus, in three steps as follows. Firstly, let $\Rbb \times M$ be the symplectization of $(M,\alpha)$ with Liouville form $\lambda=e^s \alpha$, where $s \in \Rbb$; secondly, let $G:M \to \Rbb_{>0}$ be a smooth extension of the function $g \circ \phi^{-1}: \phi(M) \to \Rbb_{>0}$; finally, define the partial mapping torus
\begin{equation*}
	W_{(M,\phi)} \coloneqq \{ (s,x) \in \Rbb \times M ~|~ 0 \leq s \leq G(x) \} \big/ (0,x) \sim (G(x), \phi(x)).
\end{equation*}
We claim that $\lambda$ descends to a 1-form on $W_{(M,\phi)}$. Indeed, define $\Phi: \Rbb \times M \to \Rbb \times M$ by $\Phi(s,x) \coloneqq (s+G(x),\phi(x))$, then
\begin{equation*}
	\Phi^{\ast} (\lambda) = e^{s+G \circ \phi} \phi^{\ast} \alpha = e^{s+g} e^{-g} \alpha = \lambda
\end{equation*}
as desired. Abusing notations, we also write $\lambda$ for the descendent 1-form on $W_{(M,\phi)}$. 



Define the \emph{vertical boundary} of $W_{(M,\phi)}$ by
\begin{equation*}
\p_v W_{(M,\phi)} \coloneqq \{ (s,x) \in \Rbb \times \p M ~|~ 0 \leq s \leq G(x) \},
\end{equation*}
and the \emph{horizontal boundary} $\p_h W_{(M,\phi)}$ as the closure of $\p W_{M,\phi} \setminus \p_v W_{M,\phi}$. It follows from the construction that the Liouville vector field $X$ is outward-pointing along $\p_h W_{(M,\phi)}$ and is tangent to $\p_v W_{(M,\phi)}$. By slightly tilting $\p_v W_{(M,\phi)}$, we can assume that $X$ is everywhere outward-pointing along $\p W_{(M,\phi)}$. More precisely, let $\p M \times [-\epsilon,0] \subset M \setminus \phi(M)$ be a collar neighborhood of the boundary such that $\p M$ is identified with $\p M \times \{0\}$, where $\epsilon>0$ is small. Suppose, without loss of generality, that $G$ is constant on $\p M \times [-\epsilon,0]$. Define $P \subset [0,G] \times (\p M \times [-\epsilon,0])$ by
\begin{equation*}
P \coloneqq \{ (s,x) \in [0,G] \times (\p M \times [-\epsilon,0]) ~|~ -\epsilon s/G < \tau(x) \leq 0 \},
\end{equation*}
where $\tau: \p M \times [-\epsilon,0] \to [-\epsilon,0]$ denotes the projection. Then by construction $W_{(M,\phi)} \setminus P$ has a piecewise smooth boundary which is everywhere transverse to the Liouville vector field $X=\p_s$. Finally, one can easily round the corners on the boundary of $W_{(M,\phi)} \setminus P$ as shown in \autoref{fig:corner} to result in a smooth Liouville domain. By abuse of notation, we denote the resulting Liouville domain, again, by $W_{(M,\phi)}$. Of course as a symplectic manifold $W_{(M,\phi)}$ depends on various parameters involved in the construction above, but $W_{(M,\phi)}$ is a well-defined Liouville domain up to deformation, i.e., Liouville homotopy. In what follows we will not distinguish between Liouville domains which are deformation equivalent.

\begin{figure}[ht]
\begin{overpic}[scale=.5]{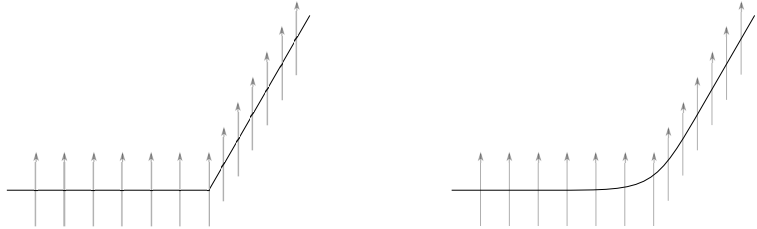}
\end{overpic}
\caption{Round the corners on $W_{(M,\phi)} \setminus P$.}
\label{fig:corner}
\end{figure}

We conclude our general construction with an obvious lemma which describes the skeleton of $W_{(M,\phi)}$.

\begin{lemma} \label{lem:skeleton of mapping torus}
The skeleton of $W_{(M,\phi)}$ is given by the mapping torus 
\begin{equation*}
\Sk(W_{(M,\phi)}) = \{ (s,x) \in \Rbb \times K ~|~ 0 \leq s \leq G(x) \} / (0,x) \sim (G(x),\phi(x)),
\end{equation*}
where $K \coloneqq \bigcap_{i \geq 0} \phi^i(M) \subset M$.
\end{lemma}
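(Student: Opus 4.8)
The plan is to compute the skeleton directly from its definition $\Sk(W_{(M,\phi)}) = \bigcap_{t>0} \phi^X_{-t}(W_{(M,\phi)})$, using the fact that the Liouville vector field is $X = \p_s$ in the coordinates on the partial mapping torus. First I would identify the backward flow of $X$: starting from a point $(s,x)$ with $0 \le s \le G(x)$, flowing backward in $s$ decreases the $s$-coordinate, and whenever $s$ hits $0$ we apply the gluing relation $(0,x) \sim (G(x),\phi(x))$, which has the effect of replacing $x$ by $\phi(x)$ and resetting $s$ to $G(\phi(x))$. So the backward orbit of $(s,x)$ stays in $W_{(M,\phi)}$ for all time precisely when the point can be ``pushed back'' indefinitely, i.e. when $x$ lies in the image of arbitrarily high iterates of $\phi$.

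Concretely, I would argue the two inclusions separately. For $\supseteq$: if $x \in K = \bigcap_{i\ge 0}\phi^i(M)$, then for each $i$ there is $x_i \in M$ with $\phi^i(x_i) = x$; since $K$ is $\phi$-invariant (as $\phi$ is injective, $\phi(K) = K \cap \phi(M) \cap \cdots = K$, using (D1)-(D2)) one can in fact take $x_i \in K$, and then the backward $X$-orbit of $(s,x)$ threads through the points $(G(x_1),x_1), (G(x_2),x_2), \dots$ and hence remains in $W_{(M,\phi)}$ for all $t > 0$; thus $(s,x)$ lies in the skeleton. For $\subseteq$: if $x \notin K$, then $x \notin \phi^N(M)$ for some $N$; following the backward orbit of any $(s,x)$, after at most $N$ applications of the gluing relation one is forced to a point whose $M$-coordinate would have to be a preimage $\phi^{-N}$-type point that does not exist, i.e. the orbit must exit $W_{(M,\phi)}$ through $\p_h W_{(M,\phi)}$ (the locus $s = 0$ glued to $s = G$, where $X$ is outward-pointing) within finite time; hence $(s,x)$ is not in the skeleton. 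Care is needed here with the corner-rounding modification near $\p_v W_{(M,\phi)}$, but since that modification only shaves off a neighborhood of the vertical boundary and the skeleton is disjoint from $\p_v W_{(M,\phi)}$ (points near $\p M$ escape because $\p M \times[-\epsilon,0] \subset M\setminus\phi(M)$ is disjoint from $\phi(M) \supset \phi^i(M)$ for $i\ge 1$), it does not affect the computation; I would remark this rather than belabor it.

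The only genuinely substantive point is verifying that $K = \bigcap_{i\ge 0}\phi^i(M)$ is $\phi$-invariant and that the nested intersection interacts correctly with the flow, so that the description of $\Sk$ as a \emph{mapping torus} (rather than merely a partial mapping torus) is legitimate, and that the gluing map $\phi|_K : K \to K$ is a homeomorphism onto $K$ — this is where injectivity (D2) and (D1) are used. The rest is a routine unwinding of the definitions, which is presumably why the lemma is called ``obvious''; I expect the write-up to be short, with the main care going into the boundary/corner bookkeeping and the $\phi$-invariance of $K$.
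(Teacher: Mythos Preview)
The paper gives no proof of this lemma --- it is introduced as ``an obvious lemma'' and left to the reader --- so there is no approach to compare against; your plan to unwind the definition of the skeleton directly is exactly the right one.

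There is, however, a direction error running through your argument. By definition $p\in\Sk(W)=\bigcap_{t>0}\phi^X_{-t}(W)$ means that for every $t>0$ one has $\phi^X_t(p)\in W$: it is the \emph{forward} $X$-orbit of $p$ that must remain in $W$, not the backward one. Now $X=\p_s$, and under the gluing the backward flow (decreasing $s$) passes from the fibre over $x$ to the fibre over $\phi(x)$ --- you describe this correctly in your first paragraph --- but this is defined for \emph{every} $x\in M$, so every backward orbit stays in $W_{(M,\phi)}$ forever and this condition is vacuous. It is the \emph{forward} flow (increasing $s$) that hits the top face $\{s=G\}$ and, via the identification, continues in the fibre over $\phi^{-1}(x)$ provided $x\in\phi(M)$; when $x\notin\phi(M)$ the top point lies on $\p_h W_{(M,\phi)}$ (the unglued portion of the \emph{top} face --- not the locus $s=0$ as you wrote, where $\p_s$ points inward) and the orbit exits. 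Hence the forward orbit of $(s,x)$ stays in $W_{(M,\phi)}$ for all time iff $x\in\phi^i(M)$ for every $i$, i.e.\ $x\in K$. Your second paragraph in effect carries out precisely this argument, but with ``backward'' written where ``forward'' belongs (your preimages $x_i$ with $\phi^i(x_i)=x$ are exactly the fibres the \emph{forward} orbit visits); once the labels are swapped the proof is correct, and your remarks on the corner-rounding and on the $\phi$-invariance of $K$ are fine.
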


It is clear that if the input $(M,\phi)$ is as in \autoref{ex:weinstein}, then the resulting $W_{(M,\phi)}$ is Weinstein and the skeleton is a smooth Lagrangian submanifold $S^1 \times Y$. In the following, we give two explicit examples of $(M,\phi)$ of very different nature, such that the resulting Liouville domains $W_{(M,\phi)}$ are ``more interesting''.

\begin{exam}[Smale's attractor/Solenoid {\cite[Section 17.1]{KH95}}] \label{ex:smale}
Consider $M=S^1 \times D^2$ equipped with the contact form $\alpha=dx+yd\theta$, where $\theta \in S^1$ and $(x,y) \in D^2 \subset \Rbb^2$. Define $\phi:M \to M$ by
\begin{equation*}
\phi(\theta,x,y) = \left( 2\theta, \tfrac{1}{10}x+\tfrac{1}{2}\cos\theta, \tfrac{1}{2} \left( \tfrac{1}{10}y+\tfrac{1}{2}\sin\theta \right) \right).
\end{equation*}
Clearly $\phi$ satisfies (D1)--(D2). But $\phi(M)$ is not a deformation retract of $M$. Instead, it winds around $M$ twice along the $S^1$-factor. It is straightforward to compute that $\phi^{\ast} \alpha=\tfrac{1}{10}\alpha$, and therefore $\phi$ satisfies (D3).  Hence $\phi$ is a contraction and we have a well-defined Liouville domain $W_{\op{SA}}\coloneqq W_{(M,\phi)}$. 

Let's examine the skeleton $\Sk(W_{\op{SA}})$. First observe that $K = \bigcap_{i \geq 0} \phi^i(M)$ is itself a mapping torus of a Cantor set. Namely, for each fixed $\theta_0 \in S^1$, the intersection $K \cap (\{\theta_0\} \times D^2) \subset D^2$ is a Cantor set. Such $K$ is known as a \emph{solenoid} in Dynamical Systems, and is a \emph{hyperbolic attractor}. In particular, it is stable under $C^{\infty}$-small perturbations. We refer the interested readers to the comprehensive monograph \cite{KH95} for more details. In light of \autoref{lem:skeleton of mapping torus}, $\Sk(W_{\op{SA}})$ is a mapping torus of the solenoid $K$.

It follows from the construction that, as a smooth $4$-manifold, $W_{\op{SA}}$ can be built by handles of index at most $2$. On the other hand, the Hausdorff dimension of $\Sk(W_{\op{SA}})$ is strictly greater than $2$. Hence it is a very intriguing question to ask whether $W_{\op{SA}}$ is Liouville homotopic to a Weinstein domain. More generally, one can ask whether $W_{\op{SA}} \times T^{\ast} Y$ is Liouville homotopic to Weinstein for any smooth manifold $Y$.
\end{exam}

\begin{exam}[Updates on \autoref{ex:smale}]
The updates presented here were established soon after the first appearance of this note on arXiv in October 2019.

First, let's generalize \autoref{ex:smale} as follows. Let $Y$ be a compact manifold without boundary and $M \subset J^1 Y$ be a closed tubular neighborhood of the $0$-section. Then $M$ inherits the standard contact form $\alpha=dz-pdq$ on $J^1 Y$. Denote $\xi=\ker\alpha$. 

Suppose $\Lambda \subset (M,\xi)$ is a Legendrian submanifold which is diffeomorphic to $Y$. Weinstein's neighborhood theorem implies that a sufficiently small closed tubular neighborhood $N_{\epsilon}(\Lambda)$ of $\Lambda$ is contactomorphic to $(M,\xi)$. Indeed, it is easy to see that if $\epsilon$ is small, then there exists
\begin{equation*}
\phi: M \xrightarrow{\sim} N_{\epsilon}(\Lambda) \subset \op{int}(M)
\end{equation*}
such that $\phi^{\ast} \alpha = e^{-g}\alpha$ for some $g:M \to \Rbb_{>0}$. In other words, such $\phi$ is a contraction in the sense of \autoref{defn:contraction}. Hence we have the associated Liouville domain $W_{(Y,\Lambda)} \coloneqq W_{(M,\phi)}$. Clearly \autoref{ex:smale} is a special case of this construction where both $Y$ and $\Lambda$ are circles.

Next, we make the following observation: \emph{all such $W_{(Y,\Lambda)}$ are Weinstein up to homotopy}. This answers my own question above in a somewhat disappointing way. Indeed, according to the \emph{Creation Lemma} discussed in \cite[\S 9.3]{HH19}, we just need one \emph{box-fold}\footnote{Strictly speaking, one needs to slightly tilt the vertical sides of the box-fold since Liouville homotopies correspond to graphical perturbations in the contactization.} based on, say, $\phi(M)$ to make $W_{(Y,\Lambda)}$ Weinstein. For example, the Liouville domain $W_{\op{SA}}$ constructed in \autoref{ex:smale} is homotopic to a Weinstein domain which can be built by one $0$-handle, two $1$-handles and one $2$-handle. In fact, it seems most likely that there exists no nontrivial upper bound on the dimension of $\Sk(W)$, i.e., other than $\dim \Sk(W)<\dim W$, in general under the assumption that $W$ is homotopic to a Weinstein domain.

However, since no details of the argument are given here, we formulate our observation as a conjecture as follows.

\begin{conj}
The Liouville structure on $W_{(Y,\Lambda)}$, in particular $W_{\op{SA}}$ from \autoref{ex:smale}, is homotopic to a Weinstein structure.
\end{conj}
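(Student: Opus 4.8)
The plan is to realize $W\coloneqq W_{(Y,\Lambda)}=W_{(M,\phi)}$ inside its Liouville completion, isolate the contact-type hypersurface along which all the recurrence of the Liouville flow is concentrated, and then destroy the fractal part of the skeleton by a single \emph{box-fold}, implemented through the Creation Lemma of \cite[\S 9.3]{HH19}. By construction the slice $\Sigma\coloneqq\{s=0\}$ is a copy of $M$ on which $\lambda$ restricts to the contact form $\alpha$, the Liouville vector field $X=\p_s$ is transverse to $\Sigma$, and the first-return dynamics along $\Sigma$ are governed by $\phi$. By \autoref{lem:skeleton of mapping torus}, $\Sk(W_{(M,\phi)})$ is the mapping torus of $K=\bigcap_{i\ge0}\phi^i(M)$, and the inclusion $K\subset\phi(M)=N_\epsilon(\Lambda)$ shows that the entire skeleton lies in the slab over $\phi(M)$. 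So a single trap installed there should absorb it.

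For the trap I would apply the box-fold of \cite[\S 9.3]{HH19} along $\Sigma$, localized to the part of $\Sigma$ lying over a slight thickening $B$ of $\phi(M)$ whose vertical walls have been tilted inward --- the tilting is forced, exactly as in the footnote to the statement, because a Liouville homotopy corresponds to a graphical perturbation in the contactization, so the walls of $B$ cannot be parallel to $X$. This replaces $X$, on a neighborhood of the slab over $B$, by a Liouville vector field $X'$ that agrees with $X$ outside that slab, is Liouville-homotopic to $X$ relative to the complement, and is gradient-like on the slab. Since $\phi(M)=N_\epsilon(\Lambda)$ is a standard model contact neighborhood of the Legendrian $\Lambda\cong Y$, the skeleton created inside the slab is isotropic of dimension at most $\dim\Lambda+1=n$. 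For $W_{\op{SA}}$ of \autoref{ex:smale}, where $Y=S^1$ and $\Lambda=\phi(S^1\times\{0\})$ winds twice around the $S^1$-factor of the solid torus, it is the $2$-complex underlying a handle decomposition with one $0$-handle, two $1$-handles and one $2$-handle, in agreement with the count asserted above.

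It then remains to promote ``$X'$ gradient-like near $B$'' to ``$(W,\omega,X')$ Weinstein''. Away from the slab, $X'=\p_s$ is outward-pointing along $\p_h W$, and after the usual tilting of the vertical boundary and the corner-rounding of \autoref{fig:corner} it is outward-pointing along all of $\p W$; moreover $s$ strictly decreases along every non-constant $X'$-orbit until that orbit either enters $B$ or escapes through $\p_h W$. Inside $B$ one splices in the Morse function furnished by the box-fold. Matching these two functions across the (tilted) walls of $B$, and checking that the interpolation region harbors no new recurrence, produces a global Morse function for which $X'$ is gradient-like, so $W_{(Y,\Lambda)}$ is Weinstein up to homotopy. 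One should also note that nothing depends on the choice of $\phi$: by Weinstein's neighborhood theorem any two contractions onto tubular neighborhoods of $\Lambda$ differ by a contact isotopy, which induces a Liouville homotopy between the associated partial mapping tori.

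The crux is the final matching step. Realizing the box-fold as an honest Liouville homotopy of the \emph{compact} domain $W_{(Y,\Lambda)}$, rather than merely as a local model, requires fitting the tilted box against the vertical boundary and interacting correctly with the corner-rounding; this is precisely the kind of bookkeeping \cite[\S 9.3]{HH19} is built to handle, but it must still be carried out in the present setting. More substantively, one must prove that $X'$ has no recurrence outside the trap --- so that its skeleton is the isotropic complex inside $B$ and the fractal mapping torus of $K$, whose Hausdorff dimension can exceed $n$, has genuinely been eliminated --- i.e., that the spliced function really is a global Lyapunov function. This is not a purely homotopy-theoretic matter: for the original Liouville structure $\Sk(W_{(M,\phi)})$ need not be a deformation retract of $W$, so one cannot simply transport handle decompositions. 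This dynamical verification is exactly the part for which no details are given here, and is the reason the statement is recorded as a conjecture.
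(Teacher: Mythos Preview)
Your proposal follows exactly the approach sketched in the paper: apply a single box-fold from \cite[\S 9.3]{HH19} based on $\phi(M)$, with the vertical sides of the box tilted since Liouville homotopies correspond to graphical perturbations in the contactization, and you reproduce the same handle count for $W_{\op{SA}}$ (one $0$-handle, two $1$-handles, one $2$-handle). You also correctly identify that the dynamical verification---that the spliced function is a global Lyapunov function with no recurrence outside the trap---is the step for which no details are supplied, which is precisely why the paper records this as a conjecture rather than a theorem; in this sense there is no ``paper's own proof'' to compare against, only the same outline you have written.
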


Finally, we present a variation of \autoref{ex:smale}. Namely, instead of considering the neighborhood of a Legendrian knot as in \autoref{ex:smale}, let $M=S^1 \times D^2$ be a neighborhood of a transverse knot, equipped with a contact form $\alpha = d\theta-ydx$, where $\theta \in S^1$ and $(x,y) \in D^2$. Up to rescaling, we can identify $S^1=\Rbb/\Zbb$ so that the length of the transverse knot $S^1 \times \{0\}$ is one.

Let $\Lambda \subset \op{int}(M)$ be a Legendrian knot. Then there exist coordinates $\bar{\theta},\bar{x},\bar{y}$ in an $\epsilon$-neighborhood $N_{\epsilon}(\Lambda) \cong \Rbb/\Zbb \times D^2(\epsilon)$ of $\Lambda \cong \Rbb/\Zbb \times \{0\}$ such that $\alpha|_{N_{\epsilon}(\Lambda)} = d\bar{x}+\bar{y}d\bar{\theta}$. Fix $0 < \delta \ll c \ll \epsilon$ to be specified later. Define $K \coloneqq \Rbb/\Zbb \times \{(0,\delta)\}$ to be a transverse push-off of $\Lambda$ in $N_{\epsilon}(\Lambda)$. Consider the following diffeomorphism
\begin{align*}
\psi: \Rbb/\Zbb \times D^2 &\to \Rbb/\Zbb \times D^2(\epsilon) \\
(\theta,x,y) &\mapsto \left( \theta-\tfrac{cx}{\delta}, cx, \tfrac{c\delta}{c-\delta y} \right)
\end{align*}
onto its image, which sends $S^1 \times \{0\}$ to $K$. We compute
\begin{equation*}
\psi^{\ast} \alpha = cdx + \tfrac{c\delta}{c-\delta y} (d\theta-\tfrac{c}{\delta} dx) = \tfrac{c\delta}{c-\delta y} \alpha.
\end{equation*}
It follows that $\psi$ is a contraction in the sense of \autoref{defn:contraction} if $c \gg \delta$.

The resulting Liouville domain $W_{(M,\psi)}$ is also homotopic to a Weinstein domain since $K$ is sufficiently close to a Legendrian. This can be considered as a reminiscent of a general principle that transverse knot theory is a stabilized version of Legendrian knot theory. The details of the argument, again, will be omitted.
\end{exam}

\begin{exam}[Anosov map]
Let $T^n = \Rbb^n/\Zbb^n$ be the $n$-dimensional torus and $M=D^{n-1} \times T^n$ where $D^{n-1} \subset \Rbb^{n-1}$ denotes the unit ball. Suppose $A \in SL(n,\Zbb)$ has real eigenvalues $\lambda_1,\cdots,\lambda_n$ such that $0 < \lambda_n < \abs{\lambda_i}$ for all $1 \leq i \leq n-1$. In particular $0<\lambda_n<1$. The existence of such $A$ will be established in the Appendix. View $A$ as an automorphism of $T^n$. Then there exists linear 1-forms $\beta_i, 1 \leq i \leq n$, on $T^n$ such that $A^{\ast}(\beta_i) = \lambda_i \beta_i$. In particular $\beta_i, 1 \leq i \leq n$, are linearly independent. Define the contact form
\begin{equation*}
\alpha \coloneqq \beta_n + \sum_{1 \leq i \leq n-1} y_i \beta_i
\end{equation*}
on $M$, where $(y_1,\cdots,y_{n-1}) \in D^{n-1}$. The map $\phi_A: M \to M$ defined by
\begin{equation*}
\phi_A (y_1,\cdots,y_{n-1},x) = \left( \tfrac{\lambda_n}{\lambda_1} y_1, \cdots, \tfrac{\lambda_n}{\lambda_{n-1}} y_{n-1}, Ax \right)
\end{equation*}
is clearly a contraction. Indeed, we have $\phi_A^{\ast} (\alpha) = \lambda_n \alpha$. We denote the resulting Liouville domain by $W_A$. Then the skeleton $\Sk(W_A)$ is a smooth $(n+1)$-manifold given by the mapping torus of $A: T^n \to T^n$. For $n=2$, we recover the examples of Mitsumatsu \cite{Mit95}.

An answer to the following question is desirable, but unfortunately is unknown to the author.

\begin{question}
Is $W_A$ stably homotopic to a Weinstein structure, i.e., is $W_A \times \Rbb^{2m}$ Weinstein for $m \gg 0$ ?
\end{question}

It was pointed out to the author by G. Dimitroglou-Rizell that $W_A \times \Rbb^2$, being diffeomorphic to the cotangent bundle of $\Sk(W_A)$, cannot be symplectomorphic to the cotangent bundle since everything in $W_A \times \Rbb^2$ is displaceable due to the $\Rbb^2$-factor.
\end{exam}

\begin{remark}
In a recent preprint of Eliashberg, Ogawa and Yoshiyasu \cite{EOY20}, the authors proved that every sufficiently stabilized Liouville manifold is symplectomorphic to a (flexible) Weinstein manifold. However, the symplectomorphism constructed therein is in general not compactly supported. In particular, the contact boundary at infinity is not preserved under such construction. It is still unknown, to the best of my knowledge, that whether a sufficiently stabilized Liouville manifold is necessarily compactly supported deformation equivalent to a Weinstein manifold.
\end{remark}

\appendix

\section{Toral automorphism with real spectrum}
The goal of this appendix is to construct hyperbolic toral automorphisms with real spectrum. The material presented here came from a joyful discussion with Luis Diogo, who deserves every bit of this wonderful (but maybe trivial) result, in the summer of 2019 in Uppsala.

\begin{prop} \label{prop:matrix}
Fix $n \geq 2$. For any $\epsilon>0$ and any tuple $(\mu_1,\dots,\mu_{n-2}) \in \Rbb^{n-2}$, there exists a matrix $A \in SL(n,\Zbb)$ which is diagonalizable in $SL(n,\Rbb)$ such that the eigenvalues $\lambda_i, 1 \leq i \leq n$, satisfy
\begin{enumerate}
	\item $\abs{\lambda_i-\mu_i}<\epsilon$ for $1 \leq i \leq n-2$.
	\item $\abs{\lambda_{n-1}}>1/\epsilon$ and $\abs{\lambda_n}<\epsilon$.
\end{enumerate}
\end{prop}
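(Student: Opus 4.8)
The plan is to produce $A$ as the companion matrix of a carefully chosen monic integer polynomial $p(t) = t^n - c_{n-1}t^{n-1} - \dots - c_1 t - c_0$ with $c_0 = \pm 1$ (so that $\det A = \pm 1$ and $A \in SL(n,\Zbb)$), and to arrange the roots of $p$ to be real, simple, and located as prescribed. The companion matrix of $p$ has characteristic polynomial $p$, and if $p$ has $n$ distinct real roots then $A$ is diagonalizable over $\Rbb$; simplicity of the roots will be automatic from the construction, so the real content is a root-location statement about integer polynomials.

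The key steps, in order. First, I would fix a target real polynomial $q(t) = (t - \mu_1)\cdots(t - \mu_{n-2})(t - R)(t - r)$ where $R$ is a large real number and $r$ is a small nonzero real number chosen so that $R \cdot r \cdot \mu_1 \cdots \mu_{n-2} = \pm 1$ — that is, choose $r = \pm(R\,\mu_1\cdots\mu_{n-2})^{-1}$, which is small precisely because $R$ is large. (If some $\mu_i = 0$ one first perturbs it by less than $\epsilon$; this is harmless.) This $q$ has the desired eigenvalue pattern but not integer coefficients. Second, I would perturb the coefficients of $q$ to nearby integers: write $q(t) = t^n - a_{n-1}t^{n-1} - \dots - a_0$ and replace each $a_j$, $j \geq 1$, by a nearby integer $c_j$, while setting $c_0 = \pm 1$ to match the sign of $a_0$ (which is $\pm 1$ by construction, so $c_0$ is genuinely close to $a_0$). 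This gives an integer polynomial $p(t)$ whose coefficients are within $1$ of those of $q$. Third — and this is where the large parameter $R$ does the work — I would invoke continuous dependence of roots on coefficients, but quantitatively and in the right regime: a bounded perturbation of the coefficients moves the roots near $\mu_1,\dots,\mu_{n-2}$ by an amount that can be made $< \epsilon$ by taking $R$ large (since the relevant derivative/separation estimates only involve the fixed data $\mu_i$ and scale favorably), keeps the large root near $R$ hence still $> 1/\epsilon$ (taking $R > 1/\epsilon$ with room to spare), and keeps the small root near $r$ hence still $< \epsilon$. One must check the perturbed roots remain real and simple: this follows because $q$ already has $n$ well-separated real roots and the perturbation is small relative to the gaps, so by the intermediate value theorem $p$ still has a real root in a small interval around each root of $q$, accounting for all $n$ roots.

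The main obstacle is the quantitative root-tracking in Step 3: one needs to make the estimates uniform as $R \to \infty$, i.e.\ to verify that enlarging $R$ genuinely shrinks the displacement of the small and intermediate roots rather than merely relocating the problem. The cleanest way to handle this is to not perturb $q$ itself but to build $p$ directly: choose integers $c_1, \dots, c_{n-1}$ and $c_0 = \pm 1$ so that the ``top part'' $t^n - c_{n-1}t^{n-1} - \dots$ has a root near each $\mu_i$ and a root near a large integer $N$, then observe that the remaining root is forced to be $\pm 1/(N \prod \lambda_i)$ by the product-of-roots relation and is therefore automatically tiny; the freedom to choose $N$ as large as we like, independently of $\epsilon$, then buys all three inequalities at once. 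I expect the write-up to proceed by first treating the case where the $\mu_i$ are themselves close to distinct integers (reducing to choosing a polynomial with prescribed nearly-integer roots plus one large integer root), and then noting the general case follows by an initial $\epsilon$-perturbation of the $\mu_i$.
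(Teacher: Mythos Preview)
Your core approach is correct and genuinely different from the paper's. Both routes reduce to producing a monic integer polynomial with constant term $(-1)^n$ whose real roots are located as required, then take the companion matrix. From there the paper proceeds quite differently: it eliminates $\lambda_{n-1},\lambda_n$ from the system $\sigma_i^{(n)}(\lambda)=k_i$, fixes $\lambda_1,\dots,\lambda_{n-2}$ algebraically independent so that the vector $\rbf=(\sigma_1^{(n-2)},\dots,\sigma_{n-2}^{(n-2)})$ has rationally independent entries, and then invokes the Weyl--von Neumann equidistribution theorem for the irrational translation $\tau_{\rbf}$ on $T^{n-2}$ to find a large integer $k_1$ making the remaining coefficients simultaneously close to integers; a final inverse-function-theorem perturbation of $(\lambda_1,\dots,\lambda_{n-2})$ converts the approximate solution into an exact one, and $\lambda_{n-1},\lambda_n$ are then recovered from the sum and product relations. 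Your route---build $q(t)=\prod(t-\mu_i)\,(t-R)(t-r)$ with $Rr\prod\mu_i=1$, round the middle coefficients to integers, and track the root displacement via $|q'(\mu_i)|\asymp R$ so that the bounded roots move by $O(1/R)$ and the large root by $O(1)$---is more elementary and more direct; it avoids ergodic theory entirely and makes the role of the large parameter completely transparent. The paper's argument has a certain conceptual charm (integrality as a density statement on a torus) but is the longer road to the same destination.

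Two small cautions on your write-up. First, the closing suggestion---reducing to the case where the $\mu_i$ are $\epsilon$-close to distinct integers---is not a valid reduction for arbitrary real $\mu_i$ and small $\epsilon$; fortunately your Steps~1--3 do not rely on it and stand on their own. Second, in Step~1 you should perturb the $\mu_i$ to be pairwise \emph{distinct} as well as nonzero, since your derivative estimate $|q'(\mu_i)|\asymp R$ collapses at a repeated root; this is the same harmless initial perturbation you already invoke. Also fix the sign once and for all: to land in $SL(n,\Zbb)$ rather than $GL(n,\Zbb)$ you need $\prod\lambda_i=1$, which simply determines the sign of $r$.
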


\begin{proof}
Using the Frobenius companion matrix, it suffices to find infinitely many tuples $\kbf \coloneqq (k_1,\dots,k_{n-1}) \in \Zbb^{n-1}$ such that the polynomial
\begin{equation*}
P_{\kbf} \coloneqq x^n - k_{n-1}x^{n-1}+\dots+(-1)^{n-1} k_1 x+(-1)^n
\end{equation*}
has $n$ roots $\lambda_1\dots\lambda_n$ which satisfy the conditions in the Proposition. Indeed $P_{\kbf}$ is the characteristic polynomial of the matrix
\begin{equation*}
	A_{\kbf} \coloneqq
	\begin{pmatrix}
		0 & 0 & \cdots & 0 & (-1)^{n+1} \\
		1 & 0 & \cdots & 0 & (-1)^n k_1 \\
		0 & 1 & \cdots & 0 & (-1)^{n-1} k_2 \\
		\vdots & \vdots & \ddots & \vdots & \vdots \\
		0 & 0 & \cdots & 1 & k_{n-1}
	\end{pmatrix}
	\in SL(n,\Zbb),
\end{equation*}
which is exactly what we look for. Let $\sigma^{(n)}_i = \sigma^{(n)}_i(\lambda_1,\cdots,\lambda_n), 1 \leq i \leq n$, be the $i$-th elementary symmetric polynomial in $n$ variables, i.e.,
\begin{equation*}
\sigma^{(n)}_i \coloneqq \sum_{1 \leq j_1 < \cdots < j_i \leq n} \lambda_{j_1} \cdots \lambda_{j_i}.
\end{equation*}
Then it suffices to find $\kbf \in \Zbb^{n-1}$ such that the system of equations 
\begin{equation} \label{eqn:lambda and k}
\sigma^{(n)}_i (\lambda_1,\cdots,\lambda_n) = k_i,\quad 1 \leq i \leq n,
\end{equation}
is satisfied, where $k_n \coloneqq 1$. Roughly speaking, the strategy consists of three steps: first, eliminate $\lambda_{n-1},\lambda_n$ from \autoref{eqn:lambda and k}; second, prescribe a ``generic'' sequence of numbers $\lambda_i$ with $\abs{\lambda_i}<1$, $1 \leq i \leq n-2$, and find $\kbf$ using ergodic theory such that \autoref{eqn:lambda and k} (without $\lambda_{n-1},\lambda_n$) is approximately satisfied; third, argue that an exact solution to \autoref{eqn:lambda and k} exists by a suitable choice of $\abs{\lambda_{n-1}}<1, \abs{\lambda_n}>1$, and a small perturbation of $\lambda_i, 1 \leq i \leq n-2$.
	
\s\n
\textsc{Step 1.} \emph{Elimination of $\lambda_{n-1},\lambda_n$ from \autoref{eqn:lambda and k}.}
	
\s
Let us rewrite \autoref{eqn:lambda and k} as follows:
\begin{align}
\sigma^{(n-2)}_1 + \lambda_{n-1} + \lambda_n &= k_1; \label{eqn:recursion 1}\\
\sigma^{(n-2)}_j + (\lambda_{n-1}+\lambda_n) \sigma^{(n-2)}_{j-1} + \lambda_{n-1} \lambda_n \sigma^{(n-2)}_{j-2} &= k_j , \quad 2 \leq j \leq n-1; \label{eqn:recursion 2} \\
\sigma^{(n-2)}_{n-2} \lambda_{n-1} \lambda_n &= 1. \label{eqn:recursion 3}
\end{align}
Here $\sigma^{(n-2)}_{n-1} \equiv 0$ and $\sigma^{(n-2)}_0 \equiv 1$ by convention. Plugging \autoref{eqn:recursion 1} and \autoref{eqn:recursion 3} into \autoref{eqn:recursion 2}, we have
\begin{equation} \label{eqn:dynamics}
\sigma^{(n-2)}_j + (k_1-\sigma^{(n-2)}_1) \sigma^{(n-2)}_{j-1} + \sigma^{(n-2)}_{j-2} \big/ \sigma^{(n-2)}_{(n-2)} = k_j , \quad 2 \leq j \leq n-1,
\end{equation}
which is a system of equations without $\lambda_{n-1}$ and $\lambda_n$. Clearly $\lambda_{n-1},\lambda_n$ can be solved easily from \autoref{eqn:recursion 1} and \autoref{eqn:recursion 3} once we determine the values of $\lambda_i, 1 \leq i \leq n-2$.
	
\s\n
\textsc{Step 2.} \emph{Passing to a discrete dynamical system.}
	
\s
The idea is that for any fixed $\lambda_1,\cdots,\lambda_{n-2}$, we can view the left-hand side of \autoref{eqn:dynamics} as a discrete dynamical system as $k_1$ runs through the integers, while the right-hand side $(k_2,\cdots,k_{n-1}) \in \Zbb^{n-2}$ forms a lattice in $\Rbb^{n-2}$. Then the existence of approximate solutions to \autoref{eqn:dynamics} is, roughly speaking, a consequence of the \emph{ergodicity} of such dynamical system.
	
The technical heart of this argument is a theorem due to Weyl and von Neumann which we now recall. See \cite[Lect. 3]{Sin76} for an excellent exposition on this topic. Let $T^m = \Rbb^m/\Zbb^m$ be an $m$-dimensional torus. Fix a vector $\rbf=(r_1,\cdots,r_m) \in \Rbb^m$. Define the translation $\tau_{\rbf}: T^m \to T^m$ by $\tau([\xbf]) = [\xbf+\rbf]$.
	
\begin{theorem}[Weyl-von Neumann] \label{thm:ergodicity}
The translation $\tau_{\rbf}$ is ergodic if and only if $\rbf$ is \emph{irrational}, i.e., the components of $\rbf$ are linearly independent over $\Zbb$.
\end{theorem}
	
In order to apply \autoref{thm:ergodicity} to our case, choose $\lambda_i, 1 \leq i \leq n-2$, such that the vector
\begin{equation*}
\rbf \coloneqq (\sigma^{(n-2)}_1, \cdots, \sigma^{(n-2)}_{n-2}) \in \Rbb^{n-2}
\end{equation*}
is irrational. For example, it suffices to choose $\lambda_i, 1 \leq i \leq n-2$, to be algebraically independent. In particular $\lambda_i \neq \lambda_j$ whenever $i \neq j$. Let's rewrite \autoref{eqn:dynamics} as 
\begin{equation} \label{eqn:simplified dynamics}
\xbf(\rbf)+k_1 \rbf = \kbf',
\end{equation} 
where $\kbf' \coloneqq (k_2,\cdots,k_{n-1}) \in \Zbb^{n-2}$ and $\xbf(\rbf) \coloneqq (x_2,\cdots,x_{n-1}) \in \Rbb^{n-1}$ with
\begin{equation*}
x_j \coloneqq \sigma^{(n-2)}_j - \sigma^{(n-2)}_1 \sigma^{(n-2)}_{j-1} + \sigma^{(n-2)}_{j-2} \big/ \sigma^{(n-2)}_{(n-2)},
\end{equation*}
for $2 \leq j \leq n-1$. It follows from \autoref{thm:ergodicity} that for any $\epsilon>0$ and $K>0$, there exists $k_1>K$ and $\kbf' \in \Zbb^{n-2}$ such that $\abs{\xbf + k_1 \rbf - \kbf'}<\epsilon$. Switching point of view, one can think of the prescribed tuple $(\lambda_1, \cdots, \lambda_{n-2})$ as an approximate solution to \autoref{eqn:dynamics} with suitable choices of $k_i, 1 \leq i \leq n-1$.
	
\s\n
\textsc{Step 3.} \emph{From approximate solutions to exact solutions.}
	
\s
Observe that the tuple $(\lambda_1,\cdots,\lambda_{n-2})$ uniquely determines the vector $\rbf$ which approximately solves \autoref{eqn:simplified dynamics} with $k_1>K$. By choosing $K$ sufficiently large, there exists an exact solution $\rbf'$ of \autoref{eqn:simplified dynamics} which is close to $\rbf$. It remains to argue that $\rbf'$ corresponds to a tuple $(\lambda'_1,\cdots,\lambda'_{n-2})$ which is close to $(\lambda_1,\cdots,\lambda_{n-2})$. Indeed, consider the map $\Pi: \Rbb^{n-2} \to \Rbb^{n-2}$ defined by
\begin{equation*}
\Pi(\lambda_1,\cdots,\lambda_{n-2}) = (\sigma^{(n-2)}_1, \cdots, \sigma^{(n-2)}_{n-2}).
\end{equation*}
The Jacobian Jac$(\Pi) \neq 0$ if $\lambda_i, 1 \leq i \leq n-2$, are algebraically independent. By the Inverse Function Theorem, $\Pi^{-1} (\rbf')$ exists and is close to $(\lambda_1,\cdots,\lambda_{n-2})$. Abusing notations, let us write $\Pi^{-1}(\rbf') = (\lambda_1,\cdots,\lambda_{n-2})$.
	
To wrap up the proof, let us rewrite \autoref{eqn:recursion 1} and \autoref{eqn:recursion 3} as follows:
\begin{equation} \label{eqn:last two lambdas}
\lambda_{n-1}+\lambda_n = k_1 - \sigma^{(n-2)}_1, \quad \lambda_{n-1} \lambda_n = 1 \big/ \sigma^{(n-2)}_{n-2}.
\end{equation}
Since both $\sigma^{(n-2)}_1$ and $\sigma^{(n-2)}_{n-2} \neq 0$ are finite numbers, \autoref{eqn:last two lambdas} admits a solution $(\lambda_{n-1},\lambda_n)$ with $\abs{\lambda_{n-1}}>1/\epsilon$ and $\abs{\lambda_n}<\epsilon$ as long as $k_1>K$ is sufficiently large.
\end{proof}

\s\n
{\em Acknowledgements.}  The author is grateful to Ko Honda for many years of collaboration which essentially shaped his understanding of contact and symplectic structures. He also wants to thank his friends and colleagues at Uppsala and Nantes for their interest and curiosity in this work. Finally, the author thanks an anonymous referee for his/her comments on the first draft.

\bibliography{mybib}
\bibliographystyle{amsalpha}

\end{document}